\newcommand{\cO}{\mathcal{O}}
\newcommand{\cL}{\mathcal{L}}
\newcommand{\cF}{\mathcal{F}}
\newcommand{\cE}{\mathcal{E}}
\newcommand{\bP}{\mathbb{P}}
\newcommand{\bF}{\mathbb{F}}
\theoremstyle{plain}
\newtheorem{theorem}{Theorem}
\newtheorem{proposition}[theorem]{Proposition}
\newtheorem{lemma}[theorem]{Lemma}
\theoremstyle{definition}
\newtheorem{example}[theorem]{Example}
\newtheorem{remark}[theorem]{Remark}
\title{On the instability of syzygy bundles on toric surfaces}
\author[L. Devey]{Lucie Devey}
\address{School of Mathematics,
The University of Edinburgh, United Kingdom}
\email{ldevey2@ed.ac.uk}
\author[M. Hering]{Milena Hering}
\address{School of Mathematics,
The University of Edinburgh, United Kingdom}
\email{m.hering@ed.ac.uk}
\author[K. Jochemko]{Katharina Jochemko}
\address{KTH Royal Institute of Technology, Stockholm, Sweden}
\email{jochemko@kth.se}
\author[H. S\"uß]{Hendrik S\"uß}
\address{Institut f\"ur Mathematik, Friedrich-Schiller-Universität Jena, Germany}
\email{hendrik.suess@uni-jena.de}
\begin{document}

\begin{abstract}
    We show that for every toric surface $X$ apart from $\mathbb{P}^2$ and $\mathbb{P}^1\times\mathbb{P}^1$ and every ample line bundle $\cL$ on $X$ there exists an ample polarisation $A$ for $X$, such that the syzygy bundle $M_{\cL^{\otimes d}}$ associated to the tensor power $\cL^{\otimes d}$ is not stable with respect to $A$ for every $d$ sufficiently large. 
\end{abstract}

\maketitle

Let $X$ be a projective variety of dimension $n$ over an algebraically closed field $k$ and let  $\mathcal{L}$ be an ample and globally generated line bundle on $X$. The \emph{syzygy bundle} $M_{\mathcal{L}}$ is defined by the following short exact sequence: 
\begin{equation}\label{eq:ML}
0 \to M_{\mathcal{L}} \to H^0(X,\mathcal{L})\otimes \mathcal{O}_X \to \mathcal{L}\to 0.
\end{equation}

There has been interest in (slope) stability of syzygy bundles as they come up in the study of defining equations and higher syzygies of embeddings of algebraic varieties, and there also are connections to tight closure. The study of syzygy bundles goes back to Butler~\cite{Butler}.  On curves,  $M_L$ is stable as soon as $\deg L\geq g+1$, and several cases are known in higher dimensions, see for example the introduction to \cite{EinLazarsfeldMustopa}.
More recently, Rekuski \cite{Rekuski} showed that  for an ample and globally generated line bundle $\mathcal{L}$ on a smooth projective variety, $M_{\cL^d}$ is stable with respect to $\cL$ for $d\gg 0$ and provides effective bounds for $d$, generalising a result on surfaces by \cite{EinLazarsfeldMustopa}. 

In \cite[Problem 2.5]{EinLazarsfeldMustopa}, the authors ask whether for $\cL$ ample and $A$ an arbitrary ample divisor,  $M_{\cL^d}$ is  slope stable with respect to $A$ for $d\gg 0$. The following proposition shows that this is not the case. It also provides a counterexample to Theorem~1.2 of \cite{TLZ}.

\begin{proposition}\label{prop:counterexample1}
    Let $X$ be the blow up of the projective plane at one point, $E$ the exceptional divisor and $H$ the hyperplane divisor. Then for $D = 6H-E$ and $d > 17$ we have that $M_{\mathcal{O}(dD)}$ is not semistable with respect to $-K_X = 3H-E$. The destabilising subbundle is $M_{\mathcal{O}(dD-E)}$.
\end{proposition}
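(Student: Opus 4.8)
The plan is to produce an explicit destabilising subsheaf and to reduce semistability to an elementary inequality in $d$. Recall the intersection numbers on $X=\mathrm{Bl}_p\mathbb{P}^2$: $H^2=1$, $H\cdot E=0$, $E^2=-1$, and $K_X=-3H+E$, so that $A=-K_X=3H-E$. From the defining sequence \eqref{eq:ML} one reads off that $M_{\mathcal{O}(F)}$ has $c_1=-F$ and rank $h^0(\mathcal{O}(F))-1$ whenever $F$ is globally generated. I would first record the two intersection numbers that drive everything, $D\cdot A=(6H-E)(3H-E)=17$ and $E\cdot A=1$, and note that both $dD$ and $dD-E$ are ample (they are positive on the two $(-1)$-curves $E$ and $H-E$ and have positive self-intersection), hence globally generated, so both syzygy bundles are defined and locally free.

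Next I would construct the inclusion $M_{\mathcal{O}(dD-E)}\hookrightarrow M_{\mathcal{O}(dD)}$. Let $s_E\in H^0(\mathcal{O}(E))$ be the section cutting out $E$; multiplication by $s_E$ embeds $\mathcal{O}(dD-E)$ into $\mathcal{O}(dD)$ and identifies $W:=s_E\cdot H^0(\mathcal{O}(dD-E))$ with a subspace of $V:=H^0(\mathcal{O}(dD))$. The evaluation maps then fit into a commutative square whose rows are the two defining sequences, with injective vertical maps $W\otimes\mathcal{O}_X\hookrightarrow V\otimes\mathcal{O}_X$ and $\mathcal{O}(dD-E)\xrightarrow{\cdot s_E}\mathcal{O}(dD)$. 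Restricting the (injective) map $W\otimes\mathcal{O}_X\to V\otimes\mathcal{O}_X$ to kernels gives the desired injection on the $M$'s. Applying the snake lemma identifies the quotient as the kernel of the evaluation $(V/W)\otimes\mathcal{O}_X\to\mathcal{O}_E(dD)$ on $E\cong\mathbb{P}^1$; since this quotient embeds into a trivial bundle it is torsion-free, so $M_{\mathcal{O}(dD-E)}$ is a genuine subbundle. The same computation shows that the drop in rank is $h^0(\mathcal{O}(dD))-h^0(\mathcal{O}(dD-E))=h^0(\mathbb{P}^1,\mathcal{O}(d))=d+1$.

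Then I would compute $h^0$. Because $X$ is toric and the bundles are nef, higher cohomology vanishes in every characteristic, so Riemann--Roch gives $h^0(\mathcal{O}(dD))=1+\tfrac12(35d^2+17d)$ and hence $\mathrm{rk}\,M_{\mathcal{O}(dD)}=\tfrac12(35d^2+17d)=:r$. With $c_1\cdot A$ equal to $-17d$ for $M_{\mathcal{O}(dD)}$ and to $-(17d-1)$ for $M_{\mathcal{O}(dD-E)}$, and with the rank drop $d+1$ from the previous step, the destabilising inequality $\mu_A(M_{\mathcal{O}(dD-E)})>\mu_A(M_{\mathcal{O}(dD)})$ becomes $17d<r/(d+1)$. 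Clearing denominators turns this into $34d(d+1)<35d^2+17d$, i.e. $d(d-17)>0$, which holds exactly for $d>17$. This simultaneously yields the failure of semistability and pins down the threshold in the statement.

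The genuinely creative step is the second one: guessing that $M_{\mathcal{O}(dD-E)}$ destabilises and realising it inside $M_{\mathcal{O}(dD)}$ via the section $s_E$. Once the subsheaf and the rank drop $d+1$ are in hand, the rest is Riemann--Roch bookkeeping and a one-line inequality; the only points needing care are the ampleness and global generation of $dD$ and $dD-E$ (so that both syzygy bundles are defined) and the vanishing of $H^1(\mathcal{O}(dD-E))$ (so that $V/W\cong H^0(\mathcal{O}_E(dD))$ and the rank count is exactly $d+1$).
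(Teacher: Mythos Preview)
Your proof is correct and follows the same approach as the paper: exhibit $M_{\mathcal{O}(dD-E)}$ as a subbundle via multiplication by a section of $\mathcal{O}(E)$, compute both slopes by Riemann--Roch (using vanishing of higher cohomology), and reduce the slope comparison to the elementary inequality $d(d-17)>0$. The only slip is calling $H-E$ a $(-1)$-curve---its self-intersection is $0$, as it is a fibre of $\mathbb{F}_1\to\mathbb{P}^1$---but this does not affect the ampleness check, since $E$ and $H-E$ generate the Mori cone regardless.
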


We obtain the following generalization to almost all toric surfaces. 

\begin{theorem}\label{thm:toricsurface}
Let $X$ be a smooth toric surface not isomorphic to $\mathbb{P}^2$ and $\mathbb{P}^1\times\mathbb{P}^1$. Then for every ample divisor $D$ there exists an ample polarisation $A$ and a positive integer $d_0$ such that for $d\geq d_0$ we have $M_{O(dD)}$ is not stable with respect to $A$. 
\end{theorem}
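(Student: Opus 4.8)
Since $X\not\cong\mathbb{P}^2,\mathbb{P}^1\times\mathbb{P}^1$, the classification of smooth complete toric surfaces (each is obtained from $\mathbb{P}^2$ or a Hirzebruch surface by equivariant blow-ups) guarantees a torus-invariant curve $C$ with $C^2<0$; note $C\cong\mathbb{P}^1$. The plan is to use this single negative curve to produce an explicit destabilising subsheaf, mimicking the structure of Proposition~\ref{prop:counterexample1} (where $C=E$). First I would fix such a $C$ and take $d$ large enough that $\mathcal{O}(dD-C)$ is ample and globally generated. Multiplication by the tautological section $s_C\in H^0(\mathcal{O}(C))$ gives $\mathcal{O}(dD-C)\hookrightarrow\mathcal{O}(dD)$ and an injection $H^0(\mathcal{O}(dD-C))\hookrightarrow H^0(\mathcal{O}(dD))$, and hence a commutative ladder between the two instances of \eqref{eq:ML}, inducing an injection $M_{\mathcal{O}(dD-C)}\hookrightarrow M_{\mathcal{O}(dD)}$. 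To contradict stability it then suffices to show this subsheaf has strictly larger $A$-slope for a suitable ample $A$.

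Next I would compute slopes. From \eqref{eq:ML} one has $c_1(M_{\mathcal{O}(dD)})=-dD$ and $\rk M_{\mathcal{O}(dD)}=h^0(\mathcal{O}(dD))-1$, similarly for $dD-C$; Demazure vanishing gives $h^i=0$ for $i>0$, so Riemann--Roch yields $\rk M_{\mathcal{O}(dD)}=\tfrac12 d^2D^2-\tfrac12 d(K_X\cdot D)$, and, using $C\cdot(C+K_X)=-2$, the rank drop $\rk M_{\mathcal{O}(dD)}-\rk M_{\mathcal{O}(dD-C)}=dD\cdot C+1$. Writing out $\mu_A(M_{\mathcal{O}(dD-C)})>\mu_A(M_{\mathcal{O}(dD)})$ and clearing denominators, the inequality becomes $(dD\cdot C+1)(dD\cdot A)<\big(\tfrac12 d^2D^2-\tfrac12 d(K_X\cdot D)\big)(C\cdot A)$, whose leading coefficient in $d$ is $\tfrac12(D^2)(C\cdot A)-(D\cdot C)(D\cdot A)$. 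Hence, provided this leading coefficient is strictly positive, the inequality holds for all $d\ge d_0$ with an effective $d_0$, and the whole problem reduces to finding an ample class $A$ with
\[
\frac{C\cdot A}{D\cdot A}>\frac{2(D\cdot C)}{D^2}=:T.
\]

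The final and hardest step is this existence statement. Setting $W:=TD-C$, I would first record the identity $W^2=T^2D^2-2T(D\cdot C)+C^2=C^2<0$, together with $W\cdot D=D\cdot C>0$. Finding ample (equivalently, by openness, nef) $A$ with $A\cdot W<0$ is exactly the assertion that $W\notin\overline{\mathrm{NE}}(X)$, since $\overline{\mathrm{NE}}(X)$ and $\Nef(X)$ are dual under the intersection pairing; equivalently $T$ lies strictly below $\lambda^*=\min\{\lambda:\lambda D-C\in\overline{\mathrm{NE}}(X)\}=\sup_{A\in\Nef(X)}\tfrac{C\cdot A}{D\cdot A}$. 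The main obstacle is proving $W\notin\overline{\mathrm{NE}}(X)$ in full generality. My approach is to exploit that $\overline{\mathrm{NE}}(X)$ is the rational polyhedral cone generated by the torus-invariant curves $D_1,\dots,D_n$, that the irreducible negative curve $C$ spans an extremal ray, and that $W$ is the reflection of $C$ across $\mathbb{R}D$ for the intersection form; the point is to locate an extremal ray $A$ of $\Nef(X)$ with $A\cdot C>0$, described by a convex support function whose edge along $C$ is long relative to $A\cdot D$, and to check that the boundary case $\lambda^*=T$ is excluded precisely because $C^2<0$ (the case $C^2=0$ is what removes $\mathbb{P}^2$ and $\mathbb{P}^1\times\mathbb{P}^1$, which carry no negative curve at all). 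This verification, carried out through the explicit fan and Mori-cone combinatorics, is where the real content lies; once $A$ is found, perturbing a boundary nef class into the ample cone and invoking the $d\ge d_0$ inequality above completes the argument.
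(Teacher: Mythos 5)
Your reduction is correct and, up to packaging, is exactly the paper's: the ladder of short exact sequences gives $M_{\cO(dD-C)}\subset M_{\cO(dD)}$ (the paper's Lemma), your rank-drop computation $h^0(dD)-h^0(dD-C)=dD\cdot C+1$ via adjunction is right, and comparing slopes for $d\gg 0$ correctly reduces everything to producing an ample $A$ with $\frac{C\cdot A}{D\cdot A}>T:=\frac{2(D\cdot C)}{D^2}$, equivalently to showing $W:=TD-C\notin\overline{\mathrm{NE}}(X)$ (this is the contrapositive of the paper's Proposition~\ref{prop:asymptotic-rat-surface}, condition \eqref{eq:stab-condition1}, with $S=C$). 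The problem is that this last step, which you yourself identify as ``where the real content lies,'' is never proved: what you offer is a plan (``locate an extremal ray \ldots whose edge along $C$ is long relative to $A\cdot D$''), not an argument.

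Worse, the claim as you set it up is false, because you fix an arbitrary torus-invariant negative curve $C$ \emph{before} choosing $A$. Take $X$ the toric blow-up of $\bP^2$ at two torus-fixed points, $D=4H-E_1-E_2$ (ample: $D\cdot E_1=D\cdot E_2=1$, $D\cdot(H-E_1-E_2)=2$, $D^2=14$), and $C=H-E_1-E_2$, which has $C^2=-1$. Then $T=\frac{2\cdot 2}{14}=\frac{2}{7}$ and
\[
7W \;=\; 2D-7C \;=\; H+5E_1+5E_2,
\]
which is visibly effective; hence $W\in\overline{\mathrm{NE}}(X)$ and \emph{no} ample $A$ satisfies $\frac{C\cdot A}{D\cdot A}>T$ for this $C$. (This also refutes your heuristic that the boundary case is excluded ``precisely because $C^2<0$'': here $C^2=-1$ and yet $\sup_{A\in\Nef(X)}\frac{C\cdot A}{D\cdot A}=\frac14<\frac27=T$.) So the destabilizing curve must be chosen in terms of $D$, and this is precisely what the paper's Proposition~\ref{prop:rationalsurface} does: among the generators $E_1,\dots,E_m$ of the effective cone it takes $E_j$ with $D\cdot E_j$ \emph{minimal} (in the example above, $E_1$ or $E_2$, not $H-E_1-E_2$), sets $A=D-tE_j$ with $t$ the nef threshold, and verifies \eqref{eq:stab-condition1} fails strictly, using that on a smooth toric surface the generators have pairwise intersection numbers in $\{0,1\}$ and may be assumed to have negative self-intersection; a small perturbation $A+\epsilon E_j$ then gives an ample polarisation. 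Your proposal is missing both the correct ($D$-dependent) choice of $C$ and the explicit construction of $A$, and these are the actual content of the theorem.
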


For an ample divisor $D$ on a Hirzebruch surface, we can precisely describe the set of ample polarizations such that for $d\gg 0$, $M_{\mathcal{O}(dD)}$ is not stable with respect to $A$. 
Let $\bF_{\ell} \to \bP^1$ be the $\ell$-th Hirzebruch surface with $F$ being a fiber  and $S$ the unique section with self-intersection $-\ell$. 
\begin{theorem}\label{thm:hirzebruch}
Let $\bF_{\ell}$ be a Hirzebruch surface with  $\ell\geq 1$ and  $A=A_1S+A_2F$ and $D=B_1S+B_2F$  be ample divisors on $\bF_{\ell}$. Set $a=A_2/A_1$ and  $b=B_2/B_1$. Suppose 
    \begin{equation}\label{eq:stab-condition-hirzebruch}
a >\frac{2b(b-\ell)}{\ell} +\ell \text{ or equality holds and } b\geq \frac{3\ell}{4} + \sqrt{\frac{\ell^2}{16}+\frac{\ell}{2}}.
\end{equation}

Then for $d\gg 0$, $M_{O(dD)}$ is not stable with respect to $A$. The destabilising subbundle is given by $M_{O(dD-S)}$. 
\end{theorem}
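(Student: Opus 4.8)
\emph{Setup and the candidate subsheaf.} Recall that on $\bF_{\ell}$ the Picard group is freely generated by $S$ and $F$ with $S^2=-\ell$, $S\cdot F=1$, $F^2=0$, and canonical class $K=-2S-(\ell+2)F$; a class $A_1S+A_2F$ is ample iff $A_1>0$ and its slope exceeds $\ell$, so ampleness of $A$ and $D$ gives $a,b>\ell$. The plan is to realise $M_{\cO(dD-S)}$ as a genuine subsheaf of $M_{\cO(dD)}$. Since $S$ is effective, I would fix $0\neq s\in H^0(\cO(S))$; multiplication by $s$ yields a commuting ladder between the defining sequences \eqref{eq:ML} of $\cO(dD-S)$ and $\cO(dD)$, whose middle arrow is the injection $H^0(\cO(dD-S))\hookrightarrow H^0(\cO(dD))$ and whose right arrow is $\cdot s$. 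A diagram chase then produces an injection $M_{\cO(dD-S)}\hookrightarrow M_{\cO(dD)}$ on kernels. For $d\gg 0$ both $dD$ and $dD-S$ are ample and globally generated, so both syzygy sheaves are bundles of ranks $R_1=h^0(dD)-1$ and $R_2=h^0(dD-S)-1$ with $0<R_2<R_1$; thus this is a proper nonzero subsheaf and it suffices to compare slopes (passing to the saturation only increases the slope).

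\emph{Slope comparison.} Writing $\mu_A(M_{\cO(dD)})=N_1/R_1$ and $\mu_A(M_{\cO(dD-S)})=N_2/R_2$ with $N_1=-dD\cdot A$ and $N_2=-(dD-S)\cdot A=N_1+S\cdot A$, non-stability follows once $N_2R_1-N_1R_2\geq 0$ for $d\gg 0$. The higher cohomology of $dD$ and $dD-S$ vanishes for $d\gg 0$, so $h^0=\chi$ and Riemann--Roch makes $R_1,R_2$ explicit quadratics in $d$. The inputs I would record are
\begin{equation*}
D\cdot A=A_1B_1(a+b-\ell),\quad S\cdot A=A_1(a-\ell),\quad D\cdot S=B_1(b-\ell),\quad D^2=B_1^2(2b-\ell),
\end{equation*}
together with $R_1-R_2=\chi(dD)-\chi(dD-S)=dD\cdot S+1$ (using the adjunction identity $S\cdot(S+K)=-2$) and $R_1=\tfrac{D^2}{2}d^2+\tfrac{B_1(2b-\ell+2)}{2}d$.

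\emph{Reduction to a polynomial in $d$.} Substituting $N_2R_1-N_1R_2=N_1(R_1-R_2)+(S\cdot A)R_1$ and using that $N_1$ and $R_1-R_2$ are affine while $R_1$ is a quadratic with vanishing constant term, I expect the decisive identity
\begin{equation*}
N_2R_1-N_1R_2=A_1B_1\,d\,(Qd+L),\qquad Q=\tfrac{\ell}{2}a-b^2+b\ell-\tfrac{\ell^2}{2},
\end{equation*}
with no constant term; and when $Q=0$ a short computation gives $L=\tfrac{b}{\ell}\bigl[(b-\ell)(2b-\ell)-\ell\bigr]$. Now $Q>0$ is exactly $a>\tfrac{2b(b-\ell)}{\ell}+\ell$, which settles the strict case of \eqref{eq:stab-condition-hirzebruch}. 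When $Q=0$ the sign is that of $L$, whose numerator $2b^2-3b\ell+\ell^2-\ell$ has larger root $\tfrac{3\ell}{4}+\sqrt{\tfrac{\ell^2}{16}+\tfrac{\ell}{2}}$ and smaller root below $\ell$; since ampleness forces $b>\ell$, the inequality $L\geq 0$ is exactly the stated bound on $b$. In either case $N_2R_1-N_1R_2\geq 0$ for $d\gg 0$, so $M_{\cO(dD)}$ is not stable with destabiliser $M_{\cO(dD-S)}$.

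\emph{Main obstacle.} The hard part is the boundary equality case: there the leading $d^2$-coefficient $Q$ vanishes and the sign is decided by the subleading coefficient $L$. I must therefore carry $R_1$ to subleading order and verify that the full expression carries no constant term, so that $L\geq 0$ genuinely controls the sign for all large $d$ (in the extremal case $L=0$ the two slopes coincide exactly and $M_{\cO(dD)}$ is strictly semistable, hence still not stable). Matching the threshold $L\geq 0$ to the quadratic root $\tfrac{3\ell}{4}+\sqrt{\tfrac{\ell^2}{16}+\tfrac{\ell}{2}}$, and checking that the second root lies outside the ample range, is the remaining bookkeeping.
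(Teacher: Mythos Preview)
Your proof is correct and essentially identical to the paper's: the paper packages the slope comparison into a general proposition for rational surfaces (showing the numerator of $\mu_A(M_{\cO(dD)})-\mu_A(M_{\cO(dD-S)})$ is a polynomial $(\ldots)d^2+(\ldots)d$ with no constant term, whose coefficients are exactly your $Q$ and $L$ up to sign) and then specialises to $\bF_\ell$, whereas you inline that computation directly. One harmless bookkeeping slip: the $d^2$-coefficient of $N_2R_1-N_1R_2$ is $A_1B_1^{2}Q$ rather than $A_1B_1Q$, but since $B_1>0$ this does not affect the sign analysis or the conclusion.
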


We will see that the destabilizing subbundles are of the form $M_{\cO(D')}$, where $D'$ is nef and globally generated, and $D-D'$ is effective. In a forthcoming article we will show that on a toric variety the subbundle of maximal slope of $M_{\cO(D)}$ for $D$ ample is of this form. Generalised versions of these subbundles appeared e.g. already in \cite[1.9]{Butler}.

\noindent
  \textbf{Acknowledgements:} We would like to thank Ana-Maria Castravet for pointing out \cite[Prop 2.3]{cltu}.
  LD was partially supported by EPSRC fellowship  EP/T018836/1. MH was partially supported by EPSRC fellowship  EP/T018836/1 and an Emmy Nother Fellowship. 
  KJ was partially supported by the Wallenberg AI, Autonomous Systems and Software Program funded by the Knut and Alice Wallenberg Foundation as well as grant nr. 2018-03968 of the Swedish Research Council. HS was partially supported by EPSRC grants EP/V013270/1,  EP/V055445/1 and by the Carl Zeiss Foundation.

\section*{Proofs of the theorems}

Recall that 
the slope of a reflexive sheaf $\mathcal{F}$ with respect to a polarisation given by an ample divisor $A$ is given by 
\[ \mu_A(\mathcal{F})=\frac{c_1(\mathcal{F})\cdot A^{n-1}}{\mathrm{rk}{F}}.\]

We call a vector bundle $\cE$ stable (semistable) with respect to $A$ if for every reflexive subsheaf $\cF$ of $\cE$, we have $\mu_A(\mathcal{F})<\mu_A(\mathcal{E})$
($\mu_A(\mathcal{F})\leq \mu_A(\mathcal{E})$).

Now let $\mathcal{L} = O(D)$ for some ample divisor $D$. Then it follows from the short exact sequence \eqref{eq:ML} and additivity of first chern classes that $c_1(M_{\mathcal{O}(D)}) =-D$. So 
\begin{equation}\label{eq:slope}
\mu_A(M_{\mathcal{O}(D)}) = \frac{-D\cdot A^{n-1}}{h^0(X,D)-1}.
\end{equation}

\begin{lemma}
Let $D$ be ample and $D'$ nef such that  $D-D'$ is effective and assume that $\cO(D), \cO(D')$ are globally generated. Then $M_{\cO(D')}$ is naturally a subbundle of $M_{\cO(D)}$. 
\end{lemma}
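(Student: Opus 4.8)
The plan is to build the inclusion directly from a section cutting out $D-D'$ and then to verify it is injective on every fibre. Since $D-D'$ is effective, I choose a nonzero section $s\in H^0(X,\cO(D-D'))$; multiplication by $s$ gives an injection $\cO(D')\to\cO(D)$ and, on global sections, a map $H^0(X,\cO(D'))\to H^0(X,\cO(D))$, $t\mapsto s\,t$, which is injective because $X$ is integral and $s\neq 0$. Tensoring this last map with $\cO_X$ as the middle arrow and taking multiplication by $s$ as the right-hand arrow, I obtain a morphism between the two defining sequences \eqref{eq:ML} for $\cL=\cO(D')$ and $\cL=\cO(D)$; the right-hand square commutes since both routes send $t\otimes f$ to $s\,(f\,t)$. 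Functoriality of the kernel then produces a map $\alpha\colon M_{\cO(D')}\to M_{\cO(D)}$ completing the ladder.

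Because $\cO(D)$ and $\cO(D')$ are globally generated, the evaluation maps in \eqref{eq:ML} are surjections of locally free sheaves, so their kernels $M_{\cO(D)}$ and $M_{\cO(D')}$ are vector bundles. To see that $\alpha$ realises $M_{\cO(D')}$ as a subbundle it suffices to show $\alpha$ is injective on each fibre. Since $\cO(D)$ is locally free, tensoring \eqref{eq:ML} with a residue field $k(x)$ remains left exact (as $\mathrm{Tor}_1(\cO(D),k(x))=0$), so $M_{\cO(D)}\otimes k(x)$ is identified with the subspace $\{t\in H^0(X,\cO(D)):\mathrm{ev}_x(t)=0\}\subseteq H^0(X,\cO(D))$, where $\mathrm{ev}_x$ is evaluation at $x$, and likewise for $D'$. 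Under these identifications $\alpha\otimes k(x)$ is simply the restriction of $t\mapsto s\,t$.

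The conclusion is then immediate: multiplication by $s$ is injective on all of $H^0(X,\cO(D'))$, hence on the subspace $M_{\cO(D')}\otimes k(x)$, for every $x$. A morphism of vector bundles that is fibrewise injective has locally free cokernel and exhibits its source as a subbundle (locally, a maximal minor of $\alpha$ is nonvanishing, giving a splitting). As a consistency check, the snake lemma applied to the ladder shows $\alpha$ is injective with $0\to\mathrm{coker}\,\alpha\to \bigl(H^0(X,\cO(D))/s\,H^0(X,\cO(D'))\bigr)\otimes\cO_X\to \cO_E(D)\to 0$, where $E$ is the divisor of $s$. I expect the only delicate point to be the fibrewise identification of $M_{\cO(D)}$ with spaces of vanishing sections, which relies on local freeness of $\cO(D)$ to preserve left-exactness under $\otimes k(x)$; once this is in place, fibrewise injectivity and the subbundle property follow at once, and the hypothesis that $D'$ is nef is in fact subsumed by its global generation.
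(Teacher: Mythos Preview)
Your proof is correct and follows the same approach as the paper: both construct the commutative ladder between the two defining sequences, using the effectivity of $D-D'$ to supply the vertical maps, and then pass to kernels. Your version is in fact more complete, since you verify fibrewise injectivity (hence the subbundle property) explicitly, whereas the paper simply asserts that commutativity of the right-hand square ``implies the claim''; your closing remark that nefness of $D'$ is subsumed by global generation is also correct.
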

\begin{proof}
This follows from the following diagram. 
\begin{equation}\label{Intro.Diagram}
\begin{gathered}
\xymatrix{
0\ar[r]& M_{\cO(D')}\ar[r] & H^0(X,\cO(D'))\otimes \cO_X \ar@{^{(}->}[d] \ar[r]&\cO(D')\ar@{^{(}->}[d]\ar[r]&0\\ 0 \ar[r] & M_{\cO(D)}\ar[r] &  H^0(X,\cO(D))\otimes \cO_X \ar[r] & \cO(D) \ar[r]&0. 
}
\end{gathered}
\end{equation}
Indeed, the square on the right commutes, which implies the claim.
\end{proof}

\begin{proof}[Proof of Proposition~\ref{prop:counterexample1}]
    Recall that $E^2=-1, H^2=1$ and $E\cdot H=0$. For an arbitrary divisor $D'=mH+nE$, we have $(D')^2 = m^2-n^2$ and $D'\cdot (-K_X) = 3m+n$. Using the formula for the slope \eqref{eq:slope} and Riemann-Roch for surfaces
    \[h^0(\cO(D'))=1+\frac{(D')^2-D'\cdot K_X}{2}, \]
    we compute the slope 
    \begin{equation}
        \mu_{-K_X}(M_{\cO(D')}) = \frac{-6m-2n}{m^2-n^2+3m+n}.
    \end{equation}
    For $m=6d$, $n=-d$ and $m=6d$, $n=-d-1$, respectively, we compute then that 
    \[ \mu_{-K_X}(M_{\cO(dD)}) = \frac{-34d}{35d^2+17d} \textrm{ and } \mu_{-K_X}(M_{\cO(dD-E)}) = \frac{-34d+2}{35d^2+15d-2}.\]
    It then follows that $\mu_{-K_X}(M_{\cO(dD-E)}) >  \mu_{-K_X}(M_{\cO(dD)})$ if and only if $d > 17$, so in that case $M_{\mathcal{O}(dD)}$ is not semistable. 
\end{proof}

\begin{remark}
  In a forthcoming paper, by exploiting the fact that $X$ is toric, we will show that the bundle $M_{\cO(D)}$ from Proposition \ref{prop:counterexample1} is in fact stable. Hence, this provides an example of an ample line bundle $\cL=\cO(D)$ and a polarization $A$ where $M_\cL$ is $A$-stable, but $M_{\cL^{\otimes d}}$ is unstable with respect to $A$ for $d\gg 0$.
\end{remark}

  \begin{proposition}
    \label{prop:asymptotic-rat-surface}
    Let $X$ be a rational surface and $D$ and $A$ ample divisors. Assume $M_{\cO(dD)}$ is  $A$-stable for $d\gg0$. Then  for all effective divisors $S$ we have
    \begin{equation}
    2(D\cdot A)(D\cdot S) - (S\cdot A)(D^2) \geq 0.\label{eq:stab-condition1}    
  \end{equation}
    and in case of equality we additionally have
    \begin{equation}
      \label{eq:stab-condition2}
      -(D\cdot A)(S^2+S\cdot K_X)+(S\cdot A)(D\cdot K_X) > 0.
    \end{equation}
  \end{proposition}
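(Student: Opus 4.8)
The plan is to test the stability of $M_{\cO(dD)}$ against the one-parameter family of subbundles furnished by the preceding Lemma. First I would fix a nonzero effective divisor $S$ and note that, since $D$ is ample, the divisor $dD - S$ is ample for all $d\gg 0$ by openness of the ample cone; together with $dD$ it is then globally generated (indeed very ample for $d\gg0$) and has vanishing higher cohomology. As $(dD)-(dD-S)=S$ is effective, the Lemma realizes $M_{\cO(dD-S)}$ as a subbundle of $M_{\cO(dD)}$, and for $d\gg0$ it is a \emph{proper} subbundle, since the rank drop $h^0(\cO(dD))-h^0(\cO(dD-S)) = d(D\cdot S)-\tfrac12(S^2+S\cdot K_X)$ is positive. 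Stability of $M_{\cO(dD)}$ with respect to $A$ therefore forces
\[
\mu_A(M_{\cO(dD-S)}) < \mu_A(M_{\cO(dD)}) \qquad \text{for all } d\gg 0,
\]
and the two displayed conditions should emerge as the asymptotic content of this single inequality as $d\to\infty$.

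Next I would make the slopes explicit. Since $X$ is rational, for $d\gg0$ Serre duality gives $h^2(\cO(dD))=h^2(\cO(dD-S))=0$ (the relevant twists of $K_X$ pair negatively with the ample class), while $h^1$ vanishes by Serre vanishing; hence Riemann--Roch reads $h^0(\cO(D'))-1=\tfrac12\bigl((D')^2-D'\cdot K_X\bigr)$ for $D'=dD$ and $D'=dD-S$. Combined with $c_1(M_{\cO(D')})=-D'$ and formula \eqref{eq:slope}, this yields
\[
\mu_A(M_{\cO(dD)}) = \frac{-2(D\cdot A)}{d(D^2)-D\cdot K_X}, \qquad \mu_A(M_{\cO(dD-S)}) = \frac{-2\bigl(d(D\cdot A)-S\cdot A\bigr)}{d^2(D^2)-d\bigl(2D\cdot S+D\cdot K_X\bigr)+S^2+S\cdot K_X}.
\]

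The final step is to clear denominators. Both denominators equal $2(h^0-1)>0$ for $d\gg0$, so the slope inequality is equivalent to a polynomial inequality in $d$; after expanding and cancelling the common quadratic term I expect it to collapse to
\[
d\bigl[\,2(D\cdot A)(D\cdot S)-(S\cdot A)(D^2)\,\bigr] + \bigl[\,(S\cdot A)(D\cdot K_X)-(D\cdot A)(S^2+S\cdot K_X)\,\bigr] > 0.
\]
Since this degree-one expression in $d$ must be positive for all sufficiently large $d$, its leading coefficient is necessarily nonnegative, which is exactly \eqref{eq:stab-condition1}; and if that coefficient vanishes, positivity of the constant term is forced, giving \eqref{eq:stab-condition2} (strict because the subbundle is proper).

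The algebra is routine; the points requiring genuine care are the two supporting inputs that let the Lemma and Riemann--Roch apply, namely (i) that $dD-S$ is globally generated with no higher cohomology once $d$ is large, for every fixed effective $S$, and (ii) the sign bookkeeping when cross-multiplying, as both slopes are negative. The conceptually essential --- though technically light --- step is the asymptotic extraction: reading off from ``a degree-one polynomial in $d$ is positive for all large $d$'' the two-tier conclusion (leading coefficient $\geq 0$, and strict positivity of the constant term precisely in the boundary case). I do not anticipate a serious obstacle beyond verifying (i), which follows from standard positivity results on surfaces.
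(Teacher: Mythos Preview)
Your proposal is correct and follows essentially the same route as the paper: test $M_{\cO(dD)}$ against the subbundle $M_{\cO(dD-S)}$, use vanishing of higher cohomology for $d\gg0$ together with Riemann--Roch (with $\chi(\cO_X)=1$ since $X$ is rational) to compute both slopes, clear the positive denominators, and read off the two-tier asymptotic conclusion from the resulting polynomial in $d$. The only cosmetic difference is that you cancel a factor of $d$ from $\mu_A(M_{\cO(dD)})$ before cross-multiplying, obtaining a linear expression in $d$, whereas the paper keeps it and gets $d$ times yours; the paper also invokes Kodaira vanishing directly (requiring $dD-S-K_X$ ample) rather than your Serre-vanishing/Serre-duality combination, but the effect is identical.
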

  \begin{proof}
    We may assume that $d$ is sufficiently large such that $dD-S$ and $dD-S-K_X$ are still ample.
    Set $\cL=\cO(dD)$, $\cL_1=\cO(dD-S)$. It follows from Kodaira Vanishing Theorem that the higher cohomology groups of $\cL$ and $\cL_1$ are all $0$. We consider the subbundle $\cF_1 = M_{\cL_1} \subset M_\cL$. Using this notation we obtain from \eqref{eq:slope}
    \begin{align}
  \mu_A(M_\cL)&=-\frac{dD \cdot A}{h^0(\cL)-1}= -\frac{dD \cdot A}{\frac{1}{2}\left(d^2D^2 - dD \cdot K_X\right)}\label{eq:muL}\\
  \mu_A(\cF_1)&=-\frac{(dD-S) \cdot A}{h^0(\cL_1)-1}=-  \frac{dD\cdot A- S\cdot A}{\frac{1}{2}\left((d^2 D^2 -d D \cdot K_X)+\left( -2dD\cdot S + S^2 + S \cdot K_X\right)\right)}. \label{eq:muF1}
\end{align}
Here, we are using the Riemann-Roch formula 
\[h^0(\cL)=\chi(\cL)=1+\frac{D^2-D\cdot K_X}{2}.\]

We are interested in the sign of $\mu_A(M_\cL)-\mu_A(\cF_1)$  for $d\gg 0$. We write $\mu_A(M_\cL)-\mu_A(\cF_1)$
as a fraction with common denominator $2(h^0(\cL)-1)(h^0(\cL_1)-1)$, which is clearly positive. The numerator of  $\mu_A(M_\cL)-\mu_A(\cF_1)$ is 
\begin{equation}
  \label{eq:mu-difference}
    \left(2(D\cdot A)(D\cdot S) - (S\cdot A)(D^2)\right)d^2
 + (-(D\cdot A)(S^2+S\cdot K_X)+(S\cdot A)(D\cdot K_X))d.
\end{equation}
Now, for $d \gg 0$ this sign is determined by the term of the highest degree in $d$, which 
implies (\ref{eq:stab-condition1}). In the case of equality the terms of $d$-degree $1$ determine the sign, which implies \eqref{eq:stab-condition2}.
  \end{proof}
  
Let us consider the Hirzebruch surface of degree $\ell$ that is $\bF_\ell=\operatorname{Proj}_{\bP^1}(\cO_{\bP^1} \oplus \cO_{\bP^1}(\ell))$. There is the structure morphism $\bF_\ell \to \bP^1$. Let $S\subset \bF_\ell$ denote  the unique section of self intersection $-\ell$ and let $F$ be a fibre. Then the Picard group and effective cone are generated by $S$ and $F$. The intersection product is given by $S^2=-\ell$, $F^2=0$ and $S\cdot F=1$. The canonical divisor is given by $K_{\bF_\ell}=-(2+\ell)F-2S$.

\begin{proof}[Proof of Theorem \ref{thm:hirzebruch}]
Let $A' = S+aF$ and $D'=S+bF$. Assume $M_{\cO(dD)}$ is $A$-stable for $d\gg 0$. Then  by Proposition \ref{prop:asymptotic-rat-surface} we must have 
\begin{align}
  2(D\cdot A)(D\cdot S) - (A\cdot S)(D^2)&=2(B_1D'\cdot A_1A')(B_1D'\cdot S) - (A_1A'\cdot S)((B_1D')^2)\\
                                         &=A_1B_1^2(2(a+b-\ell)(b-\ell)-(a-\ell)(2b-\ell))\nonumber \\
                                         &=A_1B_1^2(2b(b-\ell) +\ell^2-\ell a)\\
                                         &\geq 0       \nonumber                                    
\end{align}

Since $A_1, B_1$ are positive integers this implies 
\begin{equation}
a \leq \frac{2b(b-\ell)}{\ell} +\ell.\label{eq:stab-condition-hirzebruch1}
\end{equation}

Now, if $a = \frac{2b(b-\ell)}{\ell} +\ell$ then (\ref{eq:stab-condition2}) in Proposition~\ref{prop:asymptotic-rat-surface} has to hold. This leads to 
\[
 -(D\cdot A)(S^2+S\cdot K_X)+(S\cdot A)(D\cdot K_X)=A_1B_1(-(a+b-\ell)(-2)-(a-\ell)(2b+2-\ell)) > 0.
\]
Dividing by $A_1B_1$ and substituting $a$ by $\frac{2b(b-\ell)}{\ell} +\ell$ gives ${2  {\left({\left(3  b^{2} + b\right)} \ell -2b^{3} - b \ell^{2} \right)}}\cdot{\ell^{-1}} >0$ which is equivalent to $b^{2} - \frac{3}{2}\ell  b  +  \frac{1}{2}(\ell^{2}-\ell) < 0$.

The two roots of the polynomial on the left-hand-side are
\[\frac{3\ell}{4} \pm \sqrt{\frac{\ell^2}{16}+\frac{\ell}{2}}\]

Note, that since  $D$ is ample, we must have $b>\ell$ and the smaller root is in fact smaller than $\ell$. Hence, for any choice of $b > \ell$ and $a = \frac{2b(b-\ell)}{\ell} +\ell$ the line bundle $\cO(D)$ we must have
\begin{equation}
  \ell < b < \frac{3\ell}{4} + \sqrt{\frac{\ell^2}{16}+\frac{\ell}{2}}.\label{eq:stab-condition-hirzebruch2}
\end{equation}
\end{proof}

\begin{example}\label{ex:delPezzo}

Note that $\mathbb{F}_1$ is the blow up of $\mathbb{P}^2$ at a point from Propsition \ref{prop:counterexample1} and we have $S=E$ and $F=H-E$. 
Again, we study the stability with respect to $-K_{\bF_1}$ (or equivalently multiples thereof). For $\ell=1$ the condition (\ref{eq:stab-condition-hirzebruch1}) becomes $a \leq 2b(b-1)+1$.

  Now choose e.g. $b = \frac{9}{8}$. Then  the syzygy bundle $M_{\cL^{\otimes d}}$ with $\cL=\cO(8S+9F)$ is $(-K_{\bF_1})$-unstable for $d \gg 0$. Indeed, $-K_{\bF_1}=2S+3F$ and we have
  \[
    a=\frac{3}{2} = \frac{96}{64} > 2\cdot \frac{9}{8}\cdot \frac{1}{8}+1 =\frac{82}{64}=2b(b-1)+1.
  \]
  It is not hard to check that already for $d=1$ the corresponding syzygy bundle $M_\cL$ is $(-K_{\bF_1})$-unstable with $\cF_1=M_{\cO(7S+9F)}$ being a destabilising subbundle. Indeed, setting $D=8S+9F$, $A=-K_X=2S+3F$ and $d=1$ in (\ref{eq:muL}) and (\ref{eq:muF1}) leads to
  \[
    \mu_A(M_\cL)=-\frac{26}{53} <     \mu_A(M_{\cF_1})=-\frac{25}{51}.
  \]
  To obtain a necessary condition when a tensor power of $\cL=\cO(S+bF)$ can have $(-K_{\bF_1})$-stable syzygy bundle, we solve condition ~\eqref{eq:stab-condition-hirzebruch} in $b$ for $\ell=1$ and $a=\frac{3}{2}$, which leads to the condition $b \geq \frac{1}{2}(\sqrt{2} +1)$. 
  Note that this is in line with 
  \cite[Theorem 4.4]{Rekuski}, since the sufficient condition there is more restrictive than the necessary condition given above.
\end{example}

\begin{remark}
  It is natural to ask whether in the setting of Theorem \ref{thm:hirzebruch} for  $d \gg 0$ the bundle $M_{\cO(dD)}$ is in fact $A$-semistable if the  condition in (\ref{eq:stab-condition-hirzebruch1}) is fulfilled. In a forthcoming paper we will give a combinatorial proof that this is true. 
  \end{remark}

  \begin{proposition}
    \label{prop:rationalsurface}
  Let $X$ be a smooth surface of Picard number at least $3$ having an effective cone generated by curves  $E_1, \ldots, E_m$ with mutual intersection multiplicity of at most $1$. Then  for every ample divisor $D$ there exists an ample polarisation $A$ and a positive integer $d_0$ such that for $d\geq d_0$ the syzygy bundle $M_{O(dD)}$ is not stable with respect to~$A$. 
  \end{proposition}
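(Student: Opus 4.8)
The plan is to deduce the instability from Proposition~\ref{prop:asymptotic-rat-surface} by running its slope computation in reverse, so that everything collapses to a statement purely about the effective and nef cones of $X$. First I would record the reduction coming out of the proof of Proposition~\ref{prop:asymptotic-rat-surface}: for any effective divisor $S$ and any ample $A$, once $d$ is large enough that $dD-S$ and $dD-S-K_X$ are ample, the Lemma above makes $M_{\cO(dD-S)}$ a subbundle of $M_{\cO(dD)}$, and the numerator of $\mu_A(M_{\cO(dD)})-\mu_A(M_{\cO(dD-S)})$ is the quadratic \eqref{eq:mu-difference} in $d$. Hence, if there is an effective $S$ and an ample $A$ with
\[ 2(D\cdot A)(D\cdot S)-(S\cdot A)(D^2)<0, \]
then for $d\gg0$ the subbundle $M_{\cO(dD-S)}$ has strictly larger slope, so $M_{\cO(dD)}$ is not stable. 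Thus it suffices to produce a single such pair $(S,A)$.

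Next I would reformulate this as a convex-geometric problem. Setting $T_S:=2(D\cdot S)D-(D^2)S$, the quantity above equals $T_S\cdot A$, so I want an ample $A$ with $T_S\cdot A<0$. On a surface $\Nef(X)$ and $\Eff(X)$ are dual under the intersection pairing and ample classes are dense in $\Nef(X)$; therefore such an $A$ exists if and only if $T_S\notin\Eff(X)$, i.e.\ $T_S$ is not pseudoeffective. Since $T_S$ is linear in $S$ and every effective divisor is a nonnegative combination of the generators $E_1,\dots,E_m$, the desired pair exists if and only if $T_{E_i}$ fails to be pseudoeffective for at least one generator $E_i$. So the proposition reduces to the single claim: under $\rho(X)\ge 3$ and $E_i\cdot E_j\le 1$, some $T_{E_i}=2(D\cdot E_i)D-(D^2)E_i$ is not pseudoeffective.

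To attack this claim I would pass to the orthogonal decomposition with respect to $D$. Since $D^2>0$, Hodge index makes $D^{\perp}$ negative definite of dimension $\rho(X)-1\ge 2$; writing $E_i=\tfrac{D\cdot E_i}{D^2}D+E_i^{\perp}$ and $A=\tfrac{D\cdot A}{D^2}D+A^{\perp}$ yields
\[ T_{E_i}\cdot A=(D\cdot E_i)(D\cdot A)-(D^2)\,(E_i^{\perp}\cdot A^{\perp}). \]
Thus $T_{E_i}$ is not pseudoeffective exactly when there is a nef $A$ whose projection $A^{\perp}$ is anti-aligned with $E_i^{\perp}$ strongly enough that $(D^2)(E_i^{\perp}\cdot A^{\perp})>(D\cdot E_i)(D\cdot A)$. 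I would exhibit such an $A$ by taking an extremal ray $A_0$ of $\Nef(X)$ dual to a facet of $\Eff(X)$ chosen to avoid a well-selected generator $E_i$ (so that $A_0\cdot E_i>0$ while $A_0$ annihilates the facet's generators), and then bound $E_i^{\perp}\cdot A_0^{\perp}$ from below, using $E_i\cdot E_j\le 1$ to keep the cross terms small.

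The main obstacle is exactly this last step: converting ``$\rho(X)\ge 3$ gives room in the negative-definite space $D^{\perp}$'' into the strict numerical inequality, uniformly over all ample $D$. The delicacy is already visible in the excluded case $\bP^1\times\bP^1$ of Picard number $2$: there, for $D=pf_1+qf_2$ one computes $T_{f_1}=2q^2 f_2$, which lies on the boundary ray of $\Eff$, so $\min_{A\in\Nef}T_{f_1}\cdot A=0$ and one only ever achieves equality in \eqref{eq:stab-condition1}; that is precisely why this surface is stable and must be excluded. Hence the hypothesis $\rho(X)\ge 3$ must be used to gain a strict sign, and $E_i\cdot E_j\le 1$ to ensure the chosen extremal nef direction genuinely overshoots the threshold $\tfrac{2(D\cdot E_i)}{D^2}$. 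Establishing this for at least one generator, for every ample $D$, is where the real work lies.
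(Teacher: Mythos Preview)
Your reduction is correct: the first two paragraphs faithfully unwind Proposition~\ref{prop:asymptotic-rat-surface}, and the reformulation ``find a generator $E_i$ with $T_{E_i}=2(D\cdot E_i)D-(D^2)E_i$ not pseudoeffective'' is a valid and clean restatement of what has to be shown. But the proposal stops exactly where the content begins---you say so yourself (``where the real work lies''). The Hodge-index decomposition in the third paragraph is correct algebra, yet it does not single out which generator $E_i$ to use or which nef class $A_0$ to pair it with, and no mechanism is given for actually producing the strict inequality. Saying ``take an extremal nef ray dual to a facet avoiding $E_i$'' is a heuristic, not an argument; nothing you wrote forces $(D^2)(E_i^\perp\cdot A_0^\perp)>(D\cdot E_i)(D\cdot A_0)$.

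The paper's argument is short and completely explicit, and it supplies two ingredients your outline never reaches. First, the choice of generator: take $S=E_j$ with $D\cdot E_j$ \emph{minimal} among the $D\cdot E_i$, and take $A=D-tE_j$ with $t$ the nef threshold. Because $A$ lies on $\partial\Nef(X)$ one has $A\cdot E_i=0$ for some $i$; the hypothesis $E_i\cdot E_j\le 1$ then forces $D\cdot E_i=t$, and minimality gives $t\ge D\cdot E_j$. Second---and this is the point your orthogonal-projection sketch misses entirely---one needs that the generators have \emph{negative} self-intersection; the paper invokes \cite[Prop.~2.3]{cltu} for this (here the Picard-rank hypothesis is used). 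With $E_j^2\le -1$ and $t\ge D\cdot E_j$ a three-line estimate gives $T_{E_j}\cdot A<0$ strictly, and a small perturbation of $A$ into the ample cone finishes. Your approach would have to recover both the minimality trick and the negative self-intersection input, and as written it does neither.
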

  \begin{proof} 
    We consider $E_j$ with the property that $D\cdot E_j$ is minimal among the $E_i$.  Set $A=D-tE_j$, where $t$ is the nef-threshold, i.e. $t=\max \{t \mid D -tE_j \text{ is nef}\}$. Since $A$ is on the boundary of the nef cone, we have $A\cdot E_i=0$ for some generator $E_i$ and for all
    such $E_i$ one can deduce from the assumption $E_i\cdot E_j \in \{0,1\}$ that $D\cdot E_i=t$. By the choice of $E_j$ this implies 
\begin{equation}
t \geq D \cdot E_j.\label{eq:threshold-estimate}
\end{equation}

Now, we look at the condition (\ref{eq:stab-condition1}) in the statement of Proposition~\ref{prop:asymptotic-rat-surface} where we set $S=E_j$. 
By \cite[Prop. 2.3.]{cltu} we may assume that the curves $E_1, \ldots, E_m$ have negative self-intersection.
We then obtain 
\begin{align*}
  2(D\cdot A)(D\cdot S) - (S\cdot A)D^2 &= 2(D\cdot (D-tE_j))(D\cdot E_j) - (E_j\cdot (D-tE_j))D^2 \\
                                          &= D^2(D\cdot E_j) - 2t (D \cdot E_j)^2+t(E_j^2)D^2\\
                                          & < D^2(D\cdot E_j) +tE_j^2D^2\\
                                          & \leq  D^2(D\cdot E_j) - tD^2\\
                                          &= D^2(D\cdot E_j - t)\\
                                           & \leq 0.
\end{align*}
Where the last inequality follows from (\ref{eq:threshold-estimate}). Note, that $A$ is only nef, not ample. However, replacing $A$ by $A'=A+\epsilon E_j$ will be ample for $0 < \epsilon \leq 1$ and by continiuty we have
$2(D\cdot A')(D\cdot S) - (S\cdot A')D^2 < 0$ for $\epsilon$ sufficiently small. Now, our claim follows from Proposition~\ref{prop:asymptotic-rat-surface}.
  \end{proof}

  \begin{proof}[Proof of Theorem \ref{thm:toricsurface}]
    The case of Hirzebruch surfaces has been dealt with in Theorem~\ref{thm:hirzebruch}, so assume that $X$ is not a Hirzebruch surface, $\bP^1 \times \bP^1$ or $\bP^2$. 
    It follows from the classification of toric surfaces \cite[p.43]{Fulton93},  that in this case $X$ has Picard rank at least 3. The effective cone of any toric variety is generated by the classes of the  (finitely many) torus invariant divisors \cite[Lemma 15.1.8.]{CLS}, and on a smooth toric surface the intersection numbers of two torus invariant divisors is either zero or one \cite[p.98]{Fulton93}. So the conditions of 
Proposition~\ref{prop:rationalsurface} are fulfilled. 
  \end{proof}
  
  \begin{remark}
    The conditions on $X$ in Proposition~\ref{prop:rationalsurface} are also fulfilled for various non-toric surfaces e.g. for del Pezzo surfaces of degree at least $3$ or the blowup of $\bP^2$ in an arbitrary number of points on a line. Hence, for those surfaces the statement considered in \cite[Problem 2.5]{EinLazarsfeldMustopa} is also violated for every ample line bundle $\mathcal{L}$.
  \end{remark}

\bibliography{bib}
\bibliographystyle{alpha}
\end{document}